\documentclass[11pt,a4paper]{article}

\usepackage{amsmath}
\usepackage{amssymb,verbatim}
\usepackage{amsfonts}
\usepackage{amsthm}
\usepackage{pifont}
\usepackage{graphicx}

\newtheorem{theorem}{Theorem}

\newtheorem{corollary}{Corollary}

\theoremstyle{definition}
\newtheorem*{remark}{Remark}

\newcommand{\ds}{\displaystyle}
\newcommand{\beq}{\begin{equation*}}
\newcommand{\eeq}{\end{equation*}}
\newcommand{\beqn}{\begin{equation}}
\newcommand{\eeqn}{\end{equation}}

\newcommand{\M}{\mathcal{S}(g,\:\!s)}
\newcommand{\RR}{\mathbb R}

\newcommand{\dd}{\mathrm{d}}
\newcommand{\n}{\natural}
\newcommand{\T}{\mathcal{T}_{a,\;\!b}}
\newcommand{\R}{\mathcal{R}_{a,\;\!b}}
\newcommand{\A}{\tilde{A}}
\newcommand{\gR}{g_\mathcal{R}}
\newcommand{\GR}{G_\mathcal{R}}
\newcommand{\AR}{A_\mathcal{R}}
\newcommand{\uR}{u_\mathcal{R}}
\newcommand{\JR}{J_\mathcal{R}}

\begin{document}

\title{The distribution function of the distance between two random points in a right-angled triangle}
\author{Uwe B\"asel}
\date{}
\maketitle

\begin{abstract}
\noindent In this paper we obtain the density function and the distribution function of the distance between two uniformly and independently distributed random points in any right-angled triangle. The density function is derived from the chord length distribution function using Piefke's formula. We conclude results for random distances between two congruent right-angled triangles together forming a rectangle.\\[0.2cm]
\textbf{2010 Mathematics Subject Classification:} 60D05, 52A22\\[0.2cm]
\textbf{Keywords:} Geometric probability, random sets, integral geometry, chord length distribution function, random distances, distance distribution function, right-angled triangles, right triangle, Piefke formula
\end{abstract}

\section{Introduction}

A random line $g$ intersecting a convex set $\mathcal{K}$ in the plane produces a chord of $\mathcal{K}$. The length $s$ of this chord is a random variable. The chord length distribution function of a regular triangle was calculated by Sulanke \cite[p.~57]{Sulanke}. Harutyunyan and Ohanyan \cite{HO} calculated the chord length distribution function for regular polygons (see also B\"asel \cite[pp.\ 2-9]{Baesel}).

The distance $t$ between two points chosen independently and uniformly at random from $\mathcal{K}$ is also a random variable. Borel \cite{Borel} considered random distances in elementary geometric figures such as triangles, squares and so on (see \cite[p.\ 163]{MMP}). The expectations of the distance between two random points for an equilateral triangle and a rectangle are to be found in \cite[p.~49]{Santalo}. Ghosh \cite{Ghosh} derived the distance density function for a rectangle. B\"asel \cite[pp.\ 10-16]{Baesel} derived the density function and the distribution function of the distance for regular polygons from its chord length distribution function using Piefke's formula \cite[p.\ 130]{Piefke}. There are a lot of results concerning the distance $t$  within a convex set or in two convex sets, see Chapter 2 in \cite{Mathai}.

For practical applications in physics, material sciences, operations research, communication networks and other fields see \cite{Gille}, \cite{Marsaglia} and \cite{ZP}.

The first aim of the present paper is to derive the chord length distribution function for any right-angled triangle. The second aim is to conclude the distribution function of the distance between two points chosen independently and uniformly from a right-angled triangle.  

$A=(0,b)$, $B=(a,0)$ and $C=(0,0)$ are the vertices of the right-angled triangle (see Fig.\:\ref{Bild1}b). We assume $|\overline{AC}|=b\geq a=|\overline{BC}|$ and denote this triangle by $\T$. Clearly, it is possible to bring every right-angled triangle in such a position by use of Euclidean motions (translations and rotations) and, if necessary, a reflection. These operations do not influence the chord length distribution and the point distance distribution. $c=|\overline{AB}|$ is the length of the hypotenuse and $\alpha=\arctan(a/b)$ the inner angle between $\overline{AB}$ and $\overline{AC}$. $u=a+b+c$ is the length of the perimeter of $\T$, $\A=ab/2$ its area and $h=a\cos\alpha$ the height over $\overline{AB}$. The hypotenuse $\overline{AB}$ of the triangle $\T$ is the diameter of its circumcircle (converse of Thales' theorem).

\section{Chord length distribution function}

A straight line $g$ in the plane is determined by the angle $\psi$, $0\leq\psi<2\pi$, that the direction perpendicular to $g$ makes with a fixed direction (e.g. the $x$-axis) and by its distance $p$, $0\leq p<\infty$, from the origin $O$:
\[ g = g(p,\psi) = \{(x,y)\in\RR^2:\, x\cos\psi+y\sin\psi = p\}\,. \]
The measure $\mu$ of a set of lines $g(p,\psi)$ is defined by the integral, over the set, of the differential form $\dd g=\dd p\:\dd\psi$. Up to a constant factor, this measure is the only one that is invariant under motions in the Euclidean plane \cite[p.~28]{Santalo}.

We use the chord length distribution function of $\T$ in the form
\beq
  F(s) = \frac{1}{u}\:\mu(\{g:\,g\cap\T\not=\emptyset,\,|\chi(g)|\leq s\})\,,
\eeq
where $\chi(g)=g\cap\T$ is the chord of $\T$, produced by the line $g$, and $|\chi(g)|$ the length of $\chi(g)$ (see  \cite[p.~863]{Gates} and \cite[p.\ 161]{AMS}). (The measure of all lines $g$ that intersect a convex set is equal to its perimeter \cite[p.\ 30]{Santalo}.) So it remains to calculate the measure of all lines that produce a chord of length $|\chi(g)|\leq s$\,. Using the abbreviation
\beq
  \M:=\{g:\,g\cap\T\not=\emptyset,\,|\chi(g)|\leq s\}\,,
\eeq
we have
\beq
 \mu(\M) = \int_{\M}\,\dd g = \int_{\M}\,\dd p\:\dd\psi\,.
\eeq
Instead of the angle $\psi$ it is possible to use the angle $\phi$ that a line makes with the positive $x$-axis, hence
\beq
 \mu(\M) = \int_{\M}\,\dd p\:\dd\phi\,.
\eeq

\begin{theorem} \label{chord-distribution}
The chord length distribution function $F$ of $\T$ is given by
\beq
  F(s) = \left\{\begin{array}{lcl}
	0 & \mbox{if} & s<0\,,\\[0.1cm]
	H_1(s)/u & \mbox{if} & 0\leq s<h\,,\\[0.1cm]
	H_2(s)/u & \mbox{if} & h\leq s<a\,,\\[0.1cm]
	H_3(s)/u & \mbox{if} & a\leq s<b\,,\\[0.1cm]
	H_4(s)/u & \mbox{if} & b\leq s<c\,,\\[0.1cm]
	1 & \mbox{if} & s\geq c\,,
  \end{array}\right.
\eeq
where
\begin{align*}
  H_1(s) = {} & \frac{\Theta_1\,s}{4}\,,\\
  H_2(s) = {} & \frac{\Theta_2\,s}{4}+\Theta_3\,L_1(s,h)+c\,L_2(s,h)\,,\displaybreak[0]\\
  H_3(s) = {} & a+\Theta_4\,s+\frac{\Theta_3}{2}\,L_1(s,h)+\frac{c}{2}\,L_2(s,h)
					+\frac{b}{2a}\,L_1(s,a)+\frac{b}{2}\,L_2(s,a)\,,\displaybreak[0]\\
  H_4(s) = {} & \Theta_5+\frac{\Theta_6\,s}{4}+\frac{b}{2a}\,L_1(s,a)+\frac{b}{2}\,L_2(s,a)
					+\frac{a}{2b}\,L_1(s,b)+\frac{a}{2}\,L_2(s,b)
\end{align*}
with
\begin{align*}
  \Theta_1 = {} & \frac{a}{b}\,(2\alpha+\pi)+\frac{2b}{a}\,(\pi-\alpha)+6 \;,\;\;
  \Theta_2 = \frac{a}{b}\,(2\alpha-\pi)-\frac{2b}{a}\,\alpha+6\,,\\
  \Theta_3 = {} & \frac{a}{b}+\frac{b}{a} \;,\;\;
  \Theta_4 = 1-\frac{b}{a}\,\alpha \;,\;\;
  \Theta_5 = a+b \;,\;\;
  \Theta_6 = \frac{a}{b}\,(2\alpha-\pi)-\frac{2b}{a}\,\alpha+2
\end{align*}
and
\beq
  L_1(s,a) = s\arcsin\frac{a}{s} \;,\quad L_2(s,a) = \sqrt{1-\left(\frac{a}{s}\right)^2}\;.  					
\eeq
\end{theorem}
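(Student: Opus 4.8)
The plan is to evaluate $\mu(\M)$ by first integrating over $p$ for each fixed direction $\phi$ of the chord, as the excerpt already sets up. The key geometric fact is that, for fixed $\phi$, the length $|\chi(g(p,\phi))|$ is a continuous piecewise-linear function of $p$ on the interval where $g(p,\phi)$ meets $\T$: starting from the value $0$ when $g$ first supports $\T$ at a vertex, it grows linearly, has a single corner when $g$ passes through the ``middle'' vertex of $\T$ (the one lying strictly between the other two when the vertices are projected onto the direction orthogonal to $\phi$), and then decreases linearly back to $0$. Denote by $w(\phi)$ the width of $\T$ orthogonal to $\phi$ (the length of the $p$-interval for which $g(p,\phi)$ meets $\T$) and by $\ell(\phi)$ the largest chord length in direction $\phi$, i.e.\ the value at that middle vertex. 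A short computation with the two slopes of this ``tent'' shows that the $p$-measure of $\{p:\,g(p,\phi)\cap\T\neq\emptyset,\ |\chi(g(p,\phi))|\leq s\}$ equals $w(\phi)$ when $s\geq\ell(\phi)$ and equals $s\,w(\phi)/\ell(\phi)$ when $0\leq s<\ell(\phi)$ (and the same formula survives in the degenerate directions $\phi\in\{0,\alpha,\pi/2\}$, where one slope of the tent is infinite). Hence
\[
  \mu(\M)=\int w(\phi)\,\min\!\left(1,\frac{s}{\ell(\phi)}\right)\dd\phi ,
\]
the integral running over all directions; since $\int w(\phi)\,\dd\phi$ equals the measure of all lines meeting $\T$, which is $u$, this already gives $F(s)=1$ for $s\geq c=\max_\phi\ell(\phi)$.

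Next I would write $w$ and $\ell$ explicitly. The three directions perpendicular to the sides of $\T$ occur at $\phi=0$, $\phi=\alpha$ and $\phi=\pi/2$; together with $\phi=\pi/2+\alpha$ (the direction of the hypotenuse) they cut the range of $\phi$ into the arcs on which the middle vertex is $C$, $A$ or $B$. Projecting $A=(0,b)$, $B=(a,0)$, $C=(0,0)$ onto the direction orthogonal to $\phi$ gives, on the three arcs $[0,\pi/2]$, $[\pi/2,\pi/2+\alpha]$, $[\pi/2+\alpha,\pi]$,
\[
  w(\phi)=c\cos(\phi-\alpha),\qquad w(\phi)=a\sin\phi,\qquad w(\phi)=-\,b\cos\phi ,
\]
respectively; and since a chord through a vertex $V$ to the opposite side of $\T$ has length (distance of $V$ from that side) divided by $\cos$ of the angle between the chord and the corresponding altitude, one finds on the same arcs
\[
  \ell(\phi)=\frac{h}{\cos(\phi-\alpha)},\qquad \ell(\phi)=\frac{b}{\sin\phi},\qquad \ell(\phi)=\frac{-\,a}{\cos\phi},
\]
the three distances being exactly the altitudes $h$, $b$, $a$ of $\T$ from $C$, $A$, $B$.

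It then remains to split the integral for $\mu(\M)$ according to the five ranges $s<0$, $0\leq s<h$, $h\leq s<a$, $a\leq s<b$, $b\leq s<c$, $s\geq c$. On each arc the portion where $\ell(\phi)\leq s$ is cut out by a condition of the form $\cos(\phi-\alpha)\geq h/s$, $\sin\phi\geq b/s$ or $|\cos\phi|\geq a/s$; there the integrand is $w(\phi)$, contributing elementary sine/cosine terms at the endpoints. On the complementary portions the integrand is $s\,w(\phi)/\ell(\phi)$, which upon substitution becomes a constant multiple of $\cos^2(\phi-\alpha)$, $\sin^2\phi$ or $\cos^2\phi$, whose antiderivative produces the term linear in $s$ together with the square-root and inverse-sine quantities $L_1$ and $L_2$ evaluated at the integration limits. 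Reassembling the pieces and simplifying with $\cos\alpha=b/c$, $\sin\alpha=a/c$, $\alpha=\arctan(a/b)$ gives the functions $H_1,\dots,H_4$ and the constants $\Theta_1,\dots,\Theta_6$ as stated.

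The main obstacle is purely this bookkeeping: in each of the $s$-ranges one must correctly identify which sub-arcs (and which endpoints) are ``saturated'', rewrite the $\arccos$'s that arise as the single $\arcsin$ used in $L_1$, and carry out enough trigonometric algebra to reach the compact form of the theorem; checking continuity of $F$ at $s=h,a,b,c$ is a useful consistency test. The only non-routine point is the single-peak claim for $|\chi(g(p,\phi))|$ as a function of $p$, and the observation that the formula $s\,w(\phi)/\ell(\phi)$ continues to hold (with the infinite-slope side of the tent contributing nothing) in the three degenerate directions, so that these form a null set and need no separate treatment.
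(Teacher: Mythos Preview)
Your approach is correct and is essentially the same as the paper's: integrate first over $p$ at fixed direction $\phi$, then over $\phi$, splitting the $\phi$-range according to which vertex of $\T$ is the ``middle'' one and splitting further according to whether $s\ge\ell(\phi)$. The paper carries this out by writing, on each arc, two separate strip breadths $b_i(s,\phi)$ (one near each extreme vertex) and integrating their sum, whereas you observe that this sum collapses to the single expression $s\,w(\phi)/\ell(\phi)$ via the tent-function picture; indeed one checks directly that $b_1+b_2=(c/h)\,s\cos^2(\phi-\alpha)$, $b_1+b_3=(a/b)\,s\sin^2\phi$, $b_2+b_3=(b/a)\,s\cos^2\phi$, which are exactly your $s\,w/\ell$ on the three arcs. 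So your packaging is a genuine streamlining of the setup, but the integrals to be evaluated and the subsequent bookkeeping are identical. One small expository slip: the arcs on which the middle vertex is constant are bounded by the directions \emph{parallel} to the sides ($\phi=0,\ \pi/2,\ \pi/2+\alpha$), not the perpendicular ones you list first; your actual arc decomposition $[0,\pi/2]$, $[\pi/2,\pi/2+\alpha]$, $[\pi/2+\alpha,\pi]$ is nevertheless the correct one.
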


\begin{proof}
In the following we need the width $w$ of $\T$ in the direction perpendicular to the angle $\phi$ that is given by:
\beq
  w(\phi) = \left\{\begin{array}{lcl}
	w_1(\phi) & \mbox{if} & 0\leq\phi<\pi/2\,,\\[0.1cm]
	w_2(\phi) & \mbox{if} & \pi/2\leq\phi<\pi/2+\alpha\,,\\[0.1cm]
	w_3(\phi) & \mbox{if} & \pi/2+\alpha\leq\phi\leq\pi\,,  
  \end{array}\right.
\eeq
where
\beq
  w_1(\phi) = a\,\frac{\cos(\phi-\alpha)}{\sin\alpha} \;,\quad
  w_2(\phi) = a\sin\phi \;,\quad
  w_3(\phi) = -a\,\frac{\cos\phi}{\tan\alpha}\;.
\eeq
We have to distinguish the cases $0\leq s<h$, $h\leq s<a$, $a\leq s<b$ and $b\leq s<c$, and put
\beq
  \begin{array}{llc@{\;,\quad}llc}
	H_1(s) = \mu(\M) & \mbox{if} & 0\leq s<h &
	H_2(s) = \mu(\M) & \mbox{if} & h\leq s<a\;,\\[0.1cm]
	H_3(s) = \mu(\M) & \mbox{if} & a\leq s<b &
	H_4(s) = \mu(\M) & \mbox{if} & b\leq s<c\;.
  \end{array}
\eeq
\newpage\noindent
We demonstrate the calculation of $H_2$. Here it is necessary to distinguish the following intervals of $\phi$ (see Fig.\ \ref{Bild1})\,:
\begin{list}{\labelitemi}{
\setlength{\leftmargin}{0.6cm}
\setlength{\itemindent}{-0.35cm}
}
\item[a)] $0\leq\phi<\phi_1=\alpha-\arccos\left((a/s)\cos\alpha\right)$\,: All lines $g$ with fixed angle $\phi$ produce a chord $\chi$ of length $|\chi(g)|\leq s$ if they are lying in one of the two strips of breadth   
\beq
  b_1(s,\phi) = \frac{s}{2}\,\frac{\sin(2\phi-\alpha)+\sin\alpha}{\cos\alpha} \;\;\mbox{and}\;\;
  b_2(s,\phi) = \frac{s}{2}\,\frac{\cos(2\phi-\alpha)+\cos\alpha}{\sin\alpha}
\eeq
respectively.
\item[b)] $\phi_1\leq\phi<\phi_2=\alpha+\arccos\left((a/s)\cos\alpha\right)$\,: All lines $g$ with fixed angle $\phi$ and $g\cap\T\not=\emptyset$ produce a chord of length $|\chi(g)|\leq s$. These lines are lying in a strip of breadth $w_1(\phi)$.
\item[c)] $\phi_2\leq\phi<\pi/2$\,: See $0\leq\phi<\phi_1$.    
\item[d)] $\pi/2\leq\phi<\pi/2+\alpha$\,: All lines $g$ with fixed angle $\phi$ produce a chord of $\mbox{length}\leq s$ if they are lying in one of the two strips of breadth $b_1(s,\phi)$ and   
\beq
  b_3(s,\phi) = -\frac{s}{2}\,\sin 2\phi
\eeq
respectively.
\item[e)] $\pi/2+\alpha\leq\phi<\pi$\,: All lines $g$ with fixed angle $\phi$, lying in one of the two strips of breadth $b_2(s,\phi)$ and $b_3(s,\phi)$ repectively, produce a chord of length $\leq s$.
\end{list}
\begin{figure}[h]
  \vspace{-0.4cm}
  \begin{center}
    \includegraphics[scale=1]{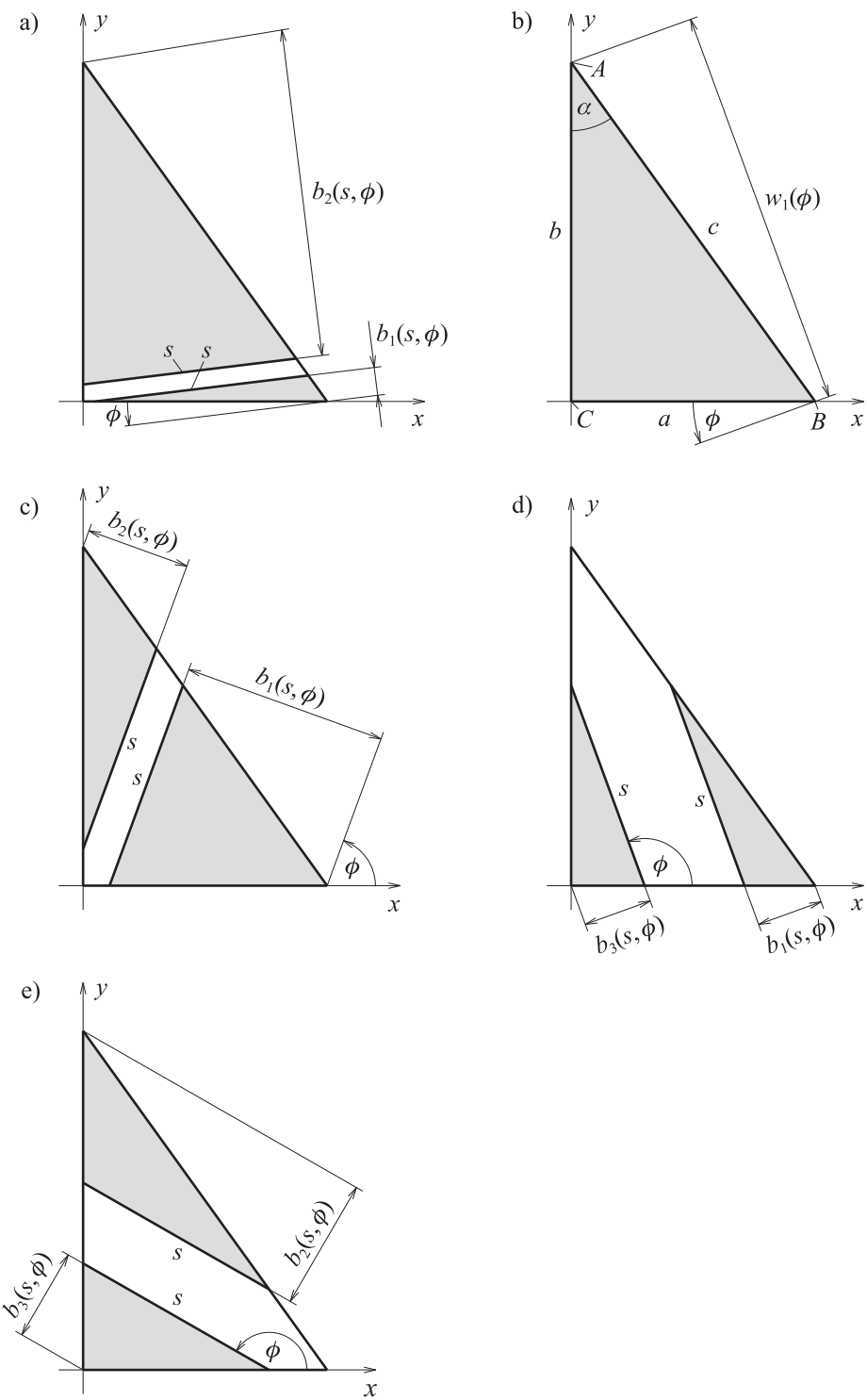}
  \end{center}
  \vspace{-0.3cm}
  \caption{\label{Bild1} Calculation of $H_2$}
\end{figure}
Therefore, using the abbreviations $b_i$ and $w_i$ for $b_i(s,\phi)$ and $w_i(\phi)$ respectively,
\begin{align*}
  H_2(s) = \int_{\M}\,\dd p\:\dd\phi 
		 = {} & \int_0^{\phi_1}(b_1+b_2)\,\dd\phi
				+ \int_{\phi_1}^{\phi_2}w_1\,\dd\phi
				+ \int_{\phi_2}^{\pi/2}(b_1+b_2)\,\dd\phi\\
			  &	+ \int_{\pi/2}^{\pi/2+\alpha}(b_1+b_3)\,\dd\phi
				+ \int_{\pi/2+\alpha}^\pi(b_2+b_3)\,\dd\phi\,.
\end{align*}
With the angles 
\beq
  \phi_3 = \pi-\arccos\dfrac{a}{s} \quad\mbox{and}\quad
  \phi_4 = \pi-\arcsin\left(\dfrac{a}{s}\,\cot\alpha\right)
\eeq
we analogously get
\begin{align*}
  H_1(s) = {} & \int_0^{\pi/2}(b_1+b_2)\,\dd\phi
				+ \int_{\pi/2}^{\pi/2+\alpha}(b_1+b_3)\,\dd\phi
				+ \int_{\pi/2+\alpha}^\pi(b_2+b_3)\,\dd\phi\,,\displaybreak[0]\\
  H_3(s) = {} & \int_0^{\phi_2}w_1\,\dd\phi
				+ \int_{\phi_2}^{\pi/2}(b_1+b_2)\,\dd\phi
			  	+ \int_{\pi/2}^{\pi/2+\alpha}(b_1+b_3)\,\dd\phi\\
			  &	+ \int_{\pi/2+\alpha}^{\phi_3}(b_2+b_3)\,\dd\phi
				+ \int_{\phi_3}^\pi w_3\,\dd\phi\,,
\end{align*}
\clearpage
\begin{align*}				
  H_4(s) = {} & \int_0^{\pi/2}w_1\,\dd\phi
				+ \int_{\pi/2}^{\phi_4}w_2\,\dd\phi
				+ \int_{\phi_4}^{\pi/2+\alpha}(b_1+b_3)\,\dd\phi\\
			  &	+ \int_{\pi/2+\alpha}^{\phi_3}(b_2+b_3)\,\dd\phi
				+ \int_{\phi_3}^\pi w_3\,\dd\phi\,.	
\end{align*}
Using the indefinite integrals
\begin{align*}
  \int b_1(s,\phi)\,\dd\phi = {} & \frac{s}{4}\,\frac{2\phi\sin\alpha-\cos(2\phi-\alpha)}{\cos\alpha}\,,\\[0.1cm]
  \int b_2(s,\phi)\,\dd\phi = {} & \frac{s}{4}\,\frac{2\phi\cos\alpha+\sin(2\phi-\alpha)}{\sin\alpha}\,,\\[0.1cm]
  \int b_3(s,\phi)\,\dd\phi = {} & \frac{s}{2}\,\cos^2\phi\,,\\[0.1cm]
  \int w_1(\phi)\,\dd\phi\:\, = {} & \frac{a\sin(\phi-\alpha)}{\sin\alpha} = a(\cot\alpha\sin\phi-\cos\phi)\,,\\[0.1cm]
  \int w_2(\phi)\,\dd\phi\:\, = {} & -a\cos\phi\;,\quad
  \int w_3(\phi)\,\dd\phi   = -\frac{a\sin\phi}{\tan\alpha}\,,
\end{align*}
one finds the result of Theorem \ref{chord-distribution}.
\end{proof}
\begin{remark}
It is easy to derive the chord length density function from Theorem \ref{chord-distribution}. 
\end{remark}

\section{Point distances}

\begin{theorem} \label{distance-density}
The density function $g$ of the distance $t$ between two random points in $\T$ is given by
\beq
  g(t) = \left\{\begin{array}{ll}
	\dfrac{2t}{\A}\left[\pi+\dfrac{1}{\A}\,(J^*(t)-ut)\right] & \mbox{if} \quad 0\leq t\leq c\,,\\[0.4cm]
	0 & \mbox{else}\,,
  \end{array}\right.
\eeq
where
\beq
  J^*(t) = \left\{\begin{array}{lcl}
	J_1^*(0,t) & \mbox{if} & 0\leq t<h\,,\\[0.1cm]
	J_1^*(0,h)+J_2^*(h,t) & \mbox{if} & h\leq t<a\,,\\[0.1cm]
	J_1^*(0,h)+J_2^*(h,a)+J_3^*(a,t) & \mbox{if} & a\leq t<b\,,\\[0.1cm]
	J_1^*(0,h)+J_2^*(h,a)+J_3^*(a,b)+J_4^*(b,t) & \mbox{if} & b\leq t\leq c
  \end{array}\right.
\eeq
with
\beq
  J_k^*(s,t) = H_k^*(t) - H_k^*(s)\,,
\eeq
where
\begin{align*}
  H_1^*(t) = {} & \frac{\Theta_1\,t^2}{8}\,,\displaybreak[0]\\[0.1cm]
  H_2^*(t) = {} & \frac{\Theta_2\,t^2}{8}+\Theta_3\,L_1^*(t,h)+c\,L_2^*(t,h)\,,\\[0.1cm]
  H_3^*(t) = {} & at+\frac{\Theta_4\,t^2}{2}+\frac{\Theta_3}{2}\,L_1^*(t,h)+\frac{c}{2}\,L_2^*(t,h)
					+\frac{b}{2a}\,L_1^*(t,a)+\frac{b}{2}\,L_2^*(t,a)\,,\\[0.1cm]
  H_4^*(t) = {} & \Theta_5\,t+\frac{\Theta_6\,t^2}{8}+\frac{b}{2a}\,L_1^*(t,a)+\frac{b}{2}\,L_2^*(t,a)
					+\frac{a}{2b}\,L_1^*(t,b)+\frac{a}{2}\,L_2^*(t,b)
\end{align*}
with
\begin{align*}
  L_1^*(t,a) = {} & \frac{1}{2}\left(a\sqrt{t^2-a^2}+t^2\arcsin\frac{a}{t}\right) \,,\;\;
  L_2^*(t,a) = \sqrt{t^2-a^2}+a\arcsin\frac{a}{t}\,.  					
\end{align*}
\end{theorem}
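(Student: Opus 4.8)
The proof rests on \emph{Piefke's formula} \cite[p.~130]{Piefke}, which expresses the point--distance density of a convex body $\mathcal K$ of area $\A$ and perimeter $u$ directly through its perimeter--normalized chord length distribution function $F$: for $0\le t\le d$, where $d$ is the diameter of $\mathcal K$,
\[
  g(t)=\frac{2t}{\A}\left[\pi+\frac{1}{\A}\left(u\int_0^t F(\sigma)\,\dd\sigma-ut\right)\right],
\]
and $g(t)=0$ otherwise. A self-contained derivation uses the Blaschke--Petkantschin formula $\dd P_1\,\dd P_2=|x_1-x_2|\,\dd x_1\,\dd x_2\,\dd g$, with $g$ the line through $P_1,P_2$ and $x_1,x_2$ arc-length coordinates along $g$: integrating first along each chord one gets $\A^2\,\mathbb P(D\le t)=\int_{g\cap\mathcal K\neq\emptyset}\bigl(|\chi(g)|\,m^2-\tfrac23 m^3\bigr)\,\dd g$ with $m=\min(|\chi(g)|,t)$; differentiating in $t$ (the $t^3$-terms cancel), then using $\int|\chi(g)|\,\dd g=\pi\A$ and an integration by parts, yields the displayed identity. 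For $\T$ the longest chord is the hypotenuse $\overline{AB}$ (which is moreover a diameter of the circumcircle), so $d=c$ and the formula is valid on all of $[0,c]$ — exactly the support asserted for $g$.

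It thus remains to identify $u\int_0^t F(\sigma)\,\dd\sigma$ with $J^*(t)$. By Theorem~\ref{chord-distribution}, $uF(\sigma)$ equals $H_k(\sigma)$ on the $k$-th of the four subintervals with break points $h,a,b$, so $u\int_0^t F(\sigma)\,\dd\sigma=\int_0^t H(\sigma)\,\dd\sigma$, the integral of the piecewise function with branches $H_1,\dots,H_4$. Splitting this integral at $h,a,b$ and using that $H_k^*$ is an antiderivative of $H_k$ gives, on the $k$-th interval, $\int_0^t H = H_1^*(h)+\bigl(H_2^*(a)-H_2^*(h)\bigr)+\dots+\bigl(H_k^*(t)-H_k^*(s_{k-1})\bigr)$, i.e.\ $J_1^*(0,h)+\dots+J_k^*(s_{k-1},t)$ since $H_1^*(0)=0$; this is precisely the four-line definition of $J^*(t)$. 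Substituting back into Piefke's formula produces the claimed expression for $g(t)$.

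Hence the only computation left is the verification that $\tfrac{\dd}{\dd t}H_k^*(t)=H_k(t)$. For the polynomial parts this is immediate (e.g.\ $(\Theta_1 t^2/8)'=\Theta_1 t/4$, $(at+\Theta_4 t^2/2)'=a+\Theta_4 t$, $(\Theta_5 t+\Theta_6 t^2/8)'=\Theta_5+\Theta_6 t/4$), so everything reduces to the two elementary quadratures $\int s\arcsin(a/s)\,\dd s$ and $\int\sqrt{1-(a/s)^2}\,\dd s$; integration by parts for the first and the substitution $s=a\sec\theta$ for the second return exactly $L_1^*(t,a)$ and $L_2^*(t,a)$, the latter up to an additive constant that is harmless because $J_k^*(s,t)=H_k^*(t)-H_k^*(s)$ only uses differences. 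The main (and essentially only) point requiring care is the bookkeeping: keeping the perimeter factor $u$ and the area factor $\A$ in Piefke's formula straight, and checking that the patched function $J^*$ is continuous at $h,a,b$ (it is, since $J_k^*(s_{k-1},s_{k-1})=0$), so that $g$ is well defined. One may finally sanity-check that $g\ge 0$ on $[0,c]$ and $\int_0^c g(t)\,\dd t=1$.
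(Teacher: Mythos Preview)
Your argument is correct and follows essentially the same route as the paper: apply Piefke's formula in the integrated form $g(t)=\tfrac{2t}{\A}\bigl[\pi+\tfrac{1}{\A}(u\!\int_0^t F-ut)\bigr]$, identify $J^*(t)=u\!\int_0^t F$, split the integral at $h,a,b$ so that each piece becomes $H_k^*(t)-H_k^*(s_{k-1})$, and reduce the antiderivative computation to the two quadratures giving $L_1^*$ and $L_2^*$. The only cosmetic differences are that the paper quotes Piefke's identity and the transformation to the $F$-form from \cite{Piefke} and \cite{Baesel} rather than sketching it via Blaschke--Petkantschin, and evaluates $L_2^*$ by rewriting the integrand as $\sqrt{t^2-a^2}/t$ and citing a table, whereas you use the secant substitution.
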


\begin{proof}
According to a theorem of Piefke \cite[p.\ 130]{Piefke}, the density function of the distance between two random points is given by
\beq
  g(t) = \frac{2ut}{\A^2}\int_t^c (s-t)f(s)\,\dd s\,,
\eeq
where $f$ is the density function of the chord length. Following \cite[pp.\ 11/12]{Baesel}, we obtain
\beq
  g(t)
	= \frac{2t}{\A}\left[\pi-\frac{u}{\A}\left(t-\int_0^tF(s)\,\dd s\right)\right]
	= \frac{2t}{\A}\left[\pi+\frac{1}{\A}\,(J^*(t)-ut)\right]
\eeq
with
\beq
  J^*(t):=u\int_0^t F(s)\,\dd s\,.
\eeq
This yields for $0\leq t<h$
\beq
  J^*(t) = \int_0^t H_1(s)\,\dd s = H_1^*(t)-H_1^*(0) = J_1^*(0,t)\,,
\eeq
and for $h\leq t<a$
\begin{align*}
  J^*(t)
	 = {} & u\int_0^t F(s)\,\dd s = \int_0^h H_1(s)\,\dd s+\int_h^t H_2(s)\,\dd s\\[0.1cm]
	 = {} & H_1^*(h)-H_1^*(0)+H_2^*(t)-H_2^*(h) = J_1^*(0,h)+J_2^*(h,t)\,.
\end{align*}
The calculations of $J^*(t)$ for the intervals $a\leq t<b$ and $b\leq t<c$ can be carried out in the same manner.  
 
Now we determine the indefinite integrals of $L_1(t,a)$ and $L_2(t,a)$. Using integration by parts, we find
\begin{align*}
  L_1^*(t,a) = {} & \int L_1(t,a)\,\dd t = \int t\arcsin\frac{a}{t}\;\dd t\\
			 = {} & \frac{1}{2}\left(t^2\arcsin\frac{a}{t}+a\int\frac{t}{\sqrt{t^2-a^2}}\;\dd t\right).
\end{align*}
Clearly,
\beq
  \int\frac{t}{\sqrt{t^2-a^2}}\;\dd t = \sqrt{t^2-a^2}  
\eeq
and therefore
\beq
  L_1^*(t,a) = \frac{1}{2}\left(a\sqrt{t^2-a^2}+t^2\arcsin\frac{a}{t}\right).
\eeq
Since $t\geq a>0$ in the present cases, 
\begin{align*}
  L_2^*(t,a) = {} & \int L_2(t,a)\,\dd t = \int\sqrt{1-\left(\frac{a}{t}\right)^2}\;\dd t
	 = \int\frac{\sqrt{t^2-a^2}}{t}\;\dd t\,,
\end{align*}
and (cf.\ \cite[p.\ 48, Eq.\ 217]{Bronstein})
\beq
  L_2^*(t,a) = \sqrt{t^2-a^2}+a\arcsin\frac{a}{t}\,. \qedhere
\eeq 
\end{proof}

\begin{corollary} \label{G}
The distribution function $G$ of the distance $t$ between two random points in $\T$ is given by
\beq
  G(t) = \left\{\begin{array}{lcl}
	0 & \mbox{if} & t<0\,,\\[0.2cm]
	\dfrac{1}{\A}\left[t^2\left(\pi-\dfrac{2u}{3\A}\,t\right)+\dfrac{2}{\A}\,J^\n(t)\right] & \mbox{if} & 0\leq t<c\,,\\[0.4cm]
	1 & \mbox{if} & t\geq c
  \end{array}\right.
\eeq
with
\beq
  J^\n(t) = \left\{\begin{array}{lcl}
	K_1(t) & \mbox{if} & 0\leq t<h\,,\\[0.1cm]
	K_1(h)+K_2(t) & \mbox{if} & h\leq t<a\,,\\[0.1cm]
	K_1(h)+K_2(a)+K_3(t) & \mbox{if} & a\leq t<b\,,\\[0.1cm]
	K_1(h)+K_2(a)+K_3(b)+K_4(t) & \mbox{if} & b\leq t<c\,,
  \end{array}\right.
\eeq
where
\begin{align*}
  K_1(t) = {} & J_1^\n(0,t)\,,\\[0.2cm]
  K_2(t) = {} & \frac{1}{2}\left(t^2-h^2\right)[J_1^*(0,h)-H_2^*(h)]+J_2^\n(h,t)\,,\\[0.1cm]
  K_3(t) = {} & \frac{1}{2}\left(t^2-a^2\right)[J_1^*(0,h)+J_2^*(h,a)-H_3^*(a)]+J_3^\n(a,t)\,,\\[0.1cm]
  K_4(t) = {} & \frac{1}{2}\left(t^2-b^2\right)[J_1^*(0,h)+J_2^*(h,a)+J_3^*(a,b)-H_4^*(b)]+J_4^\n(b,t)
\end{align*}
with $J_k^*$ and $H_k^*$ according to Theorem \ref{distance-density} and 
\beq
  J_k^\n(s,t) = H_k^\n(t) - H_k^\n(s)\,,
\eeq
where
\begin{align*}
  H_1^\n(t) = {} & \frac{\Theta_1\,t^4}{32}\,,\\[0.1cm]
  H_2^\n(t) = {} & \frac{\Theta_2\,t^4}{32}+\Theta_3\,L_1^\n(t,h)+c\,L_2^\n(t,h)\,,\\[0.1cm]
  H_3^\n(t) = {} & \frac{at^3}{3}+\frac{\Theta_4\,t^4}{8}
					+\frac{\Theta_3}{2}\,L_1^\n(t,h)+\frac{c}{2}\,L_2^\n(t,h)
					+\frac{b}{2a}\,L_1^\n(t,a)+\frac{b}{2}\,L_2^\n(t,a)\,,\\[0.1cm]
  H_4^\n(t) = {} & \frac{\Theta_5\,t^3}{3}+\frac{\Theta_6\,t^4}{32}
					+\frac{b}{2a}\,L_1^\n(t,a)+\frac{b}{2}\,L_2^\n(t,a)
					+\frac{a}{2b}\,L_1^\n(t,b)+\frac{a}{2}\,L_2^\n(t,b)
\end{align*}
with
\begin{align*}
  L_1^\n(t,a) = {} & \frac{1}{8}\left[\frac{5a}{3}\left(t^2-a^2\right)^{3/2}
						+a^3\,\sqrt{t^2-a^2}+t^4\arcsin\frac{a}{t}\right]\,,\\[0.2cm]
  L_2^\n(t,a) = {} & \frac{1}{3}\left(t^2-a^2\right)^{3/2}+\frac{a}{2}\left(a\sqrt{t^2-a^2}
						+t^2\,\arcsin\frac{a}{t}\right)\,.  					
\end{align*}
\end{corollary}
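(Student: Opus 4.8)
The plan is to integrate the density obtained in Theorem \ref{distance-density}. Since $g$ is a probability density supported on $[0,c]$, its distribution function is $G(t)=\int_0^t g(\tau)\,\dd\tau$, which equals $0$ for $t<0$ and $1$ for $t\geq c$; so only the range $0\leq t<c$ needs attention. Substituting $g(\tau)=\frac{2\tau}{\A}\bigl[\pi+\frac1\A(J^*(\tau)-u\tau)\bigr]$ and integrating the elementary pieces gives
\[
  G(t)=\frac{\pi t^2}{\A}-\frac{2ut^3}{3\A^2}+\frac{2}{\A^2}\int_0^t\tau J^*(\tau)\,\dd\tau
      =\frac{1}{\A}\Bigl[t^2\Bigl(\pi-\frac{2u}{3\A}t\Bigr)+\frac{2}{\A}\,J^\n(t)\Bigr],
\]
where $J^\n(t):=\int_0^t\tau J^*(\tau)\,\dd\tau$. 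This already reproduces the claimed shape, so everything reduces to computing $J^\n$.

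Next I would exploit the piecewise description of $J^*$ from Theorem \ref{distance-density}. On each of $[0,h)$, $[h,a)$, $[a,b)$, $[b,c)$ the function $J^*$ equals a constant (the sum of the $J_j^*$-blocks accumulated on the earlier intervals, minus $H_k^*$ evaluated at the left endpoint of the current interval) plus $H_k^*(\tau)$. Splitting $\int_0^t$ at $h,a,b$ and integrating term by term, $\int\tau\cdot(\text{const})\,\dd\tau$ contributes the $\frac12(t^2-s^2)$ factors visible in $K_2,K_3,K_4$, while $\int_s^t\tau H_k^*(\tau)\,\dd\tau=H_k^\n(t)-H_k^\n(s)=J_k^\n(s,t)$ once $H_k^\n$ is chosen as an antiderivative of $\tau\mapsto\tau H_k^*(\tau)$. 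Collecting these pieces yields the stated $K_1,\dots,K_4$ and hence $J^\n$.

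It then remains to exhibit the antiderivatives $H_k^\n$ explicitly. The polynomial parts of $H_k^*$ integrate at once (for instance $\int\tau\cdot\frac{\Theta_1\tau^2}{8}\,\dd\tau=\frac{\Theta_1\tau^4}{32}$), so the only real work is the $L_1^*(\tau,a)$- and $L_2^*(\tau,a)$-contributions, whose $\tau$-weighted antiderivatives I call $L_1^\n(t,a)$ and $L_2^\n(t,a)$. Here $\int\tau\sqrt{\tau^2-a^2}\,\dd\tau=\frac13(\tau^2-a^2)^{3/2}$ together with $\int\tau\arcsin(a/\tau)\,\dd\tau=L_1^*(\tau,a)$ (the integral already evaluated in the proof of Theorem \ref{distance-density}) delivers $L_2^\n$ immediately; for $L_1^\n$ one additionally needs $\int\tau^3\arcsin(a/\tau)\,\dd\tau$, which by integration by parts with $v=\tau^4/4$ reduces to $\int\tau^3(\tau^2-a^2)^{-1/2}\,\dd\tau$, and the substitution $w=\tau^2-a^2$ evaluates this last integral as $\frac13(\tau^2-a^2)^{3/2}+a^2\sqrt{\tau^2-a^2}$. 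Assembling the terms produces the displayed $L_1^\n$, $L_2^\n$, and hence all $H_k^\n$.

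The step I expect to be the main obstacle is not any single integral but the bookkeeping across the four intervals: one must track the accumulated constants so that the coefficients of $\frac12(t^2-s^2)$ in $K_2,K_3,K_4$ are exactly the right partial sums of the $J_j^*$-blocks minus the boundary value of $H_k^*$, and one must ensure the $H_k^\n$ carry no stray constant of integration, so that the resulting $G$ is continuous at $h,a,b$ (which it must be, being an integral of $g$). A final check that $G(c)=1$ confirms that the constants have been assembled correctly.
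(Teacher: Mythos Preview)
Your proposal is correct and follows essentially the same route as the paper: integrate $g$ to obtain the displayed form with $J^\n(t)=\int_0^t\tau J^*(\tau)\,\dd\tau$, split the integral at $h,a,b$, write $J^*$ on each piece as a constant plus $H_k^*$, and compute the antiderivatives $H_k^\n$ via $L_1^\n$ and $L_2^\n$ using integration by parts and the elementary integrals $\int\tau\sqrt{\tau^2-a^2}\,\dd\tau$ and $\int\tau^3(\tau^2-a^2)^{-1/2}\,\dd\tau$. The only cosmetic difference is that the paper cites a table for the last integral where you use the substitution $w=\tau^2-a^2$.
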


\begin{proof}
For $0\leq t<c$ one gets
\begin{align*}
  G(t) 
	= {} & \int_0^t g(\tau)\,\dd\tau 
	= \int_0^t\left(\frac{2\pi\tau}{\A}-\frac{2u\tau^2}{\A^2}+\frac{2u\tau}{\A^2}\int_0^\tau F(s)\,\dd s\right)\dd\tau\\
	= {} & \frac{\pi t^2}{\A}-\frac{2ut^3}{3\A^2}+\frac{2u}{\A^2}\int_0^t\tau\left(\int_0^\tau F(s)\,\dd s\right)\dd\tau\\
	= {} & \frac{\pi t^2}{\A}-\frac{2ut^3}{3\A^2}+\frac{2}{\A^2}\int_0^t\tau J^*(\tau)\,\dd\tau
	= \frac{1}{\A}\left[t^2\left(\pi-\dfrac{2u}{3\A}\,t\right)+\dfrac{2}{\A}\,J^\n(t)\right]
\end{align*}
with
\beq
  J^\n(t) := \int_0^t sJ^*(s)\,\dd s\,.
\eeq 
It remains to calculate $J^\n(t)$. For $0\leq t<h$ we have
\begin{align*}
  J^\n(t) = {} & \int_0^t sJ_1^*(0,s)\,\dd s = \int_0^t s\,[H_1^*(s)-H_1^*(0)]\,\dd s\\[0.1cm]
		  = {} & H_1^\n(t)-H_1^\n(0)-\frac{1}{2}\,t^2H_1^*(0)
		  = H_1^\n(t)-H_1^\n(0)-0 = J_1^\n(0,t) = K_1(t)\,.
\end{align*}
For $h\leq t<a$ we find
\begin{align*}
  J^\n(t) = {} & K_1(h)+\int_h^t s\,[J_1^*(0,h)+J_2^*(h,s)]\,\dd s\\
		  = {} & K_1(h)+\frac{1}{2}\left(t^2-h^2\right)J_1^*(0,h)+\int_h^t sJ_2^*(h,s)\,\dd s\,.
\end{align*}
With
\begin{align*}
  \int_h^t sJ_2^*(h,s)\,\dd s
	= {} & \int_h^t s\,[H_2^*(s)-H_2^*(h)]\,\dd s 
	= \int_h^t sH_2^*(s)\,\dd s - H_2^*(h)\int_h^t s\,\dd s\displaybreak[0]\\
	= {} & H_2^\n(t)-H_2^\n(h)-\frac{1}{2}\left(t^2-h^2\right)H_2^*(h)\\
	= {} & J_2^\n(h,t)-\frac{1}{2}\left(t^2-h^2\right)H_2^*(h)
\end{align*} 
it follows that
\beq
  J^\n(t) = K_1(h)+\frac{1}{2}\left(t^2-h^2\right)[J_1^*(0,h)-H_2^*(h)]+J_2^\n(h,t) = K_1(h)+K_2(t) \,.	
\eeq
The calculation applies analogously to the intervals $a\leq t<b$ and $b\leq t<~\!\!c$.

Now we calculate the indefinite integrals of $L_1^*(t,a)$ and $L_2^*(t,a)$. We have
\begin{align*}
  L_1^\n(t,a)
	= {} & \int tL_1^*(t,a)\,\dd t = \frac{1}{2}\int t\left(a\sqrt{t^2-a^2}+t^2\arcsin\frac{a}{t}\right)\dd t\\
	= {} & \frac{1}{2}\left(a\int t\,\sqrt{t^2-a^2}\;\dd t+\int t^3\arcsin\frac{a}{t}\;\dd t\right)
\end{align*}
with
\beq
  \int t\,\sqrt{t^2-a^2}\;\dd t = \frac{1}{3}\left(t^2-a^2\right)^{3/2} \quad\mbox{\cite[p.\ 47, Eq.\ 214]{Bronstein}}\,.
\eeq
Using integration by parts, we find
\beq
  \int t^3\arcsin\frac{a}{t}\;\dd t
	= \frac{1}{4}\left(t^4\arcsin\frac{a}{t}+a\int\frac{t^2}{\sqrt{1-(a/t)^2}}\;\dd t\right).
\eeq
Since $t\geq a>0$ in the present cases, 
\beq
  \int\frac{t^2}{\sqrt{1-(a/t)^2}}\;\dd t = \int\frac{t^3}{\sqrt{t^2-a^2}}\;\dd t\,, 
\eeq
and
\beq
  \int\frac{t^3}{\sqrt{t^2-a^2}}\;\dd t
	= \frac{1}{3}\left(t^2-a^2\right)^{3/2}+a^2\,\sqrt{t^2-a^2} \quad\mbox{\cite[p.\ 48, Eq.\ 223]{Bronstein}}\,.
\eeq
This yields
\beq
  L_1^\n(t,a) = \frac{1}{8}\left[\frac{5a}{3}\left(t^2-a^2\right)^{3/2}
				+a^3\,\sqrt{t^2-a^2}+t^4\arcsin\frac{a}{t}\right].
\eeq
Further,
\begin{align*}
  L_2^\n(t,a)
	= {} & \int tL_2^*(t,a)\,\dd t = \int t\sqrt{t^2-a^2}\;\dd t+a\int t\arcsin\frac{a}{t}\;\dd t\\
	= {} & \frac{1}{3}\left(t^2-a^2\right)^{3/2}+\frac{a}{2}\left(a\sqrt{t^2-a^2}
						+t^2\,\arcsin\frac{a}{t}\right)
\end{align*}
(see the calculations of $L_1^\n(t,a)$ and $L_1^*(t,a)$).  
\end{proof}

\section{Two triangles}
\vspace{-0.15cm}
Now considering a rectangle $\R$ of side lengths $a$ and $b$ ($a\leq b$) as made up of two congruent copies of the triangle $\T$ with different orientation and sharing their hypotenuses. Let $g_2$ and $G_2$ denote respectively the density function and the distribution function of the distance between two uniformly and independently distributed points $P_1$ and $P_2$, one in each of the triangles.

Using the {\em indirect method} (see Ghosh\, \cite[pp.\ 22/23]{Ghosh}), it is possible to calculate $g_2$ from the density function $\gR$ of the distance between two random points inside $\R$  and the density function $g$ of $\T$ (according to Theorem~\ref{distance-density}). The probability of $P_1$ and $P_2$ belonging to the same triangle is equal to the probability of $P_1$ and $P_2$ belonging to different triangles, each probability being $1/2$. Therefore,
\beq
\vspace{-0.01cm}
  \gR = \frac{1}{2}\,(g+g_2)\,, \quad\mbox{whence}\quad g_2 = 2\,\gR-g \quad\mbox{and}\quad G_2 = 2\,\GR-G\,, 
\vspace{-0.01cm}
\eeq
where $\GR$ denotes the corresponding distribution function of $\gR$. As already mentioned, $\gR$ was found by Ghosh \cite{Ghosh} (Theorem\ 2, pp.\ 18/19). Denoting by $\AR=ab$ the area of $\R$ and by $\uR=2(a+b)$ its perimeter, we can write Ghosh's result as
\beq
  \gR(t) = \left\{\begin{array}{ll}
	\dfrac{2t}{\AR^2}\,\JR^*(t) & \mbox{if} \quad 0\leq t\leq c=\sqrt{a^2+b^2}\,,\\[0.4cm]
	0 & \mbox{else}\,,
  \end{array}\right.
\eeq
where
\beq
  \JR^*(t) = \left\{\begin{array}{lcl}
	H_{\mathcal{R},\;\!1}^*(t) & \mbox{if} & 0\leq t<a\,,\\[0.1cm]
	H_{\mathcal{R},\;\!2}^*(t) & \mbox{if} & a\leq t<b\,,\\[0.1cm]
	H_{\mathcal{R},\;\!3}^*(t) & \mbox{if} & b\leq t\leq c
  \end{array}\right.
\eeq
with
\begin{align*}
  H_{\mathcal{R},\;\!1}^*(t) = {} & \pi\AR-\uR t+t^2\,,\\
  H_{\mathcal{R},\;\!2}^*(t) = {} & \!-\!a^2-2bt+2bL_2^*(t,a)\,,\\
  H_{\mathcal{R},\;\!3}^*(t) = {} & \!-\!(\pi\AR+c^2)-t^2+2bL_2^*(t,a)+2aL_2^*(t,b)
\end{align*}
($L_2^*$ as in our Theorem \ref{distance-density}). From this, one easily concludes the distribution function
\beq
  \GR(t) = \int_{-\infty}^t\gR(\tau)\,\dd\tau = \left\{\begin{array}{lcl}
	0 & \mbox{if} & t<0\,,\\[0.1cm]
	\dfrac{2}{\AR^2}\,\JR^\n(t) & \mbox{if} & 0\leq t<c\,,\\[0.35cm]
	1 & \mbox{if} & t\geq c\,,
  \end{array}\right.
\eeq
where
\beq
  \JR^\n(t) = \left\{\begin{array}{lcl}
	J_{\mathcal{R},\;\!1}^\n(0,t) & \mbox{if} & 0\leq t<a\,,\\[0.1cm]
	J_{\mathcal{R},\;\!1}^\n(0,a) + J_{\mathcal{R},\;\!2}^\n(a,t) & \mbox{if} & a\leq t<b\,,\\[0.1cm]
	J_{\mathcal{R},\;\!1}^\n(0,a) + J_{\mathcal{R},\;\!2}^\n(a,b) 
		+ J_{\mathcal{R},\;\!3}^\n(b,t) & \mbox{if} & b\leq t<c
  \end{array}\right.
\eeq
with
\beq
  J_{\mathcal{R},\;\!k}^\n(s,t) = H_{\mathcal{R},\;\!k}^\n(t) - H_{\mathcal{R},\;\!k}^\n(s)\,,
\eeq
where
\beq
  \begin{array}{l@{\;=\;}l@{\;=\;}l}
	H_{\mathcal{R},\;\!1}^\n(t) & \ds{\int tH_{\mathcal{R},\;\!1}^*(t)\,\dd t} &
		\dfrac{\pi\AR t^2}{2}-\dfrac{\uR t^3}{3}+\dfrac{t^4}{4}\,,\\[0.3cm]
	H_{\mathcal{R},\;\!2}^\n(t) & \ds{\int tH_{\mathcal{R},\;\!2}^*(t)\,\dd t} &
		-\dfrac{a^2t^2}{2}-\dfrac{2bt^3}{3}+2bL_2^\n(t,a)\,,\\[0.3cm]
	H_{\mathcal{R},\;\!3}^\n(t) & \ds{\int tH_{\mathcal{R},\;\!3}^*(t)\,\dd t} &
		-\dfrac{(\pi\AR+c^2)t^2}{2}-\dfrac{t^4}{4}+2bL_2^\n(t,a)+2aL_2^\n(t,b)
  \end{array}
\eeq
with $L_2^\n$ as in Corollary \ref{G}. Now it is easy to calculate the density function~$g_2$ and the distribution function $G_2$.  Examples are to be found in subsection~\ref{Two_triangles}.

\section{Examples and simulation}
\subsection{One triangle}

Fig.\ \ref{Bild2} shows some density functions $cg$. The representation in this diagram has the advantage that $cg$ is independent from the maximum distance $c$, it depends only on the ratio $a/b$. Fig.\ \ref{Bild3} shows the comparison between distribution functions $G$ and corresponding simulation results (plot markers) with $c=1$ and $10^7$ pairs of random points.
\begin{figure}[h]
  \vspace{-0.1cm}
  \begin{center}
    \includegraphics[scale=1.15]{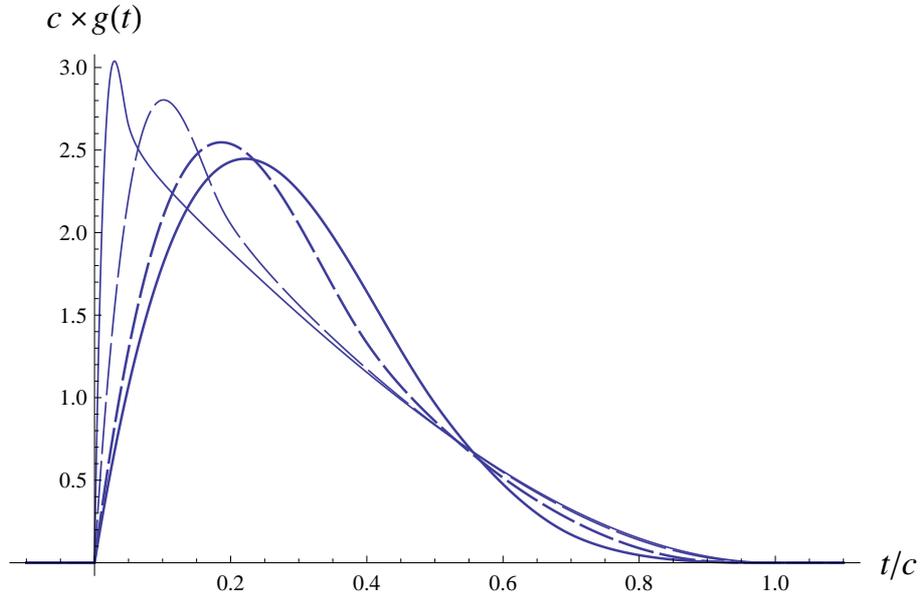}
  \end{center}
  \vspace{-0.7cm}
  \caption{\label{Bild2} $\mathcal{T}_{a,\;\!a}$ (thick), 
			$\mathcal{T}_{a,\;\!2a}$ (thick and dashed),
			$\mathcal{T}_{a,\;\!5a}$ (thin and dashed) and
			$\mathcal{T}_{a,\;\!20a}$ (thin)}
\end{figure}
\newpage
\begin{figure}  
  \vspace{0.1cm}
  \begin{center}
    \includegraphics[scale=1.15]{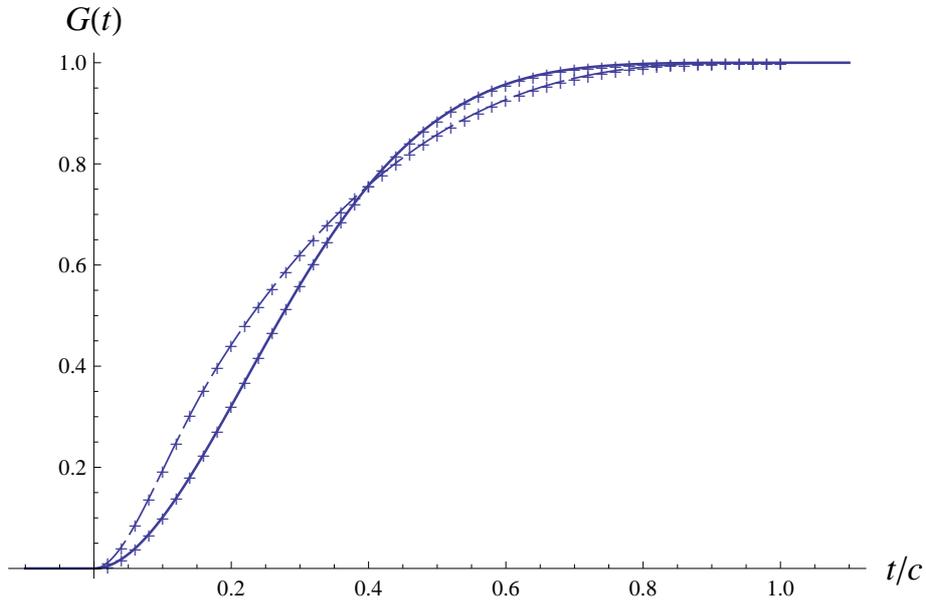}
  \end{center}
  \vspace{-0.7cm}
  \caption{\label{Bild3} $\mathcal{T}_{a,\;\!a}$ (thick) and 
			$\mathcal{T}_{a,\;\!5a}$ (thin and dashed)}
  \vspace{0cm}
\end{figure}

\subsection{Two triangles} \label{Two_triangles}

Fig.\ \ref{Bild4} shows the density function $cg_2$ for $b=5a$, and the corresponding functions $c\gR$ and $cg$. Fig.\ \ref{Bild5} shows the distribution function $G_2$ for $b=5a$, and the corresponding functions $\GR$ and $G$. The plot markers result from a simulation with $c=1$ and $10^7$ pairs of random points.
\begin{figure}[h]
  \vspace{-0.1cm}
  \begin{center}
    \includegraphics[scale=1.165]{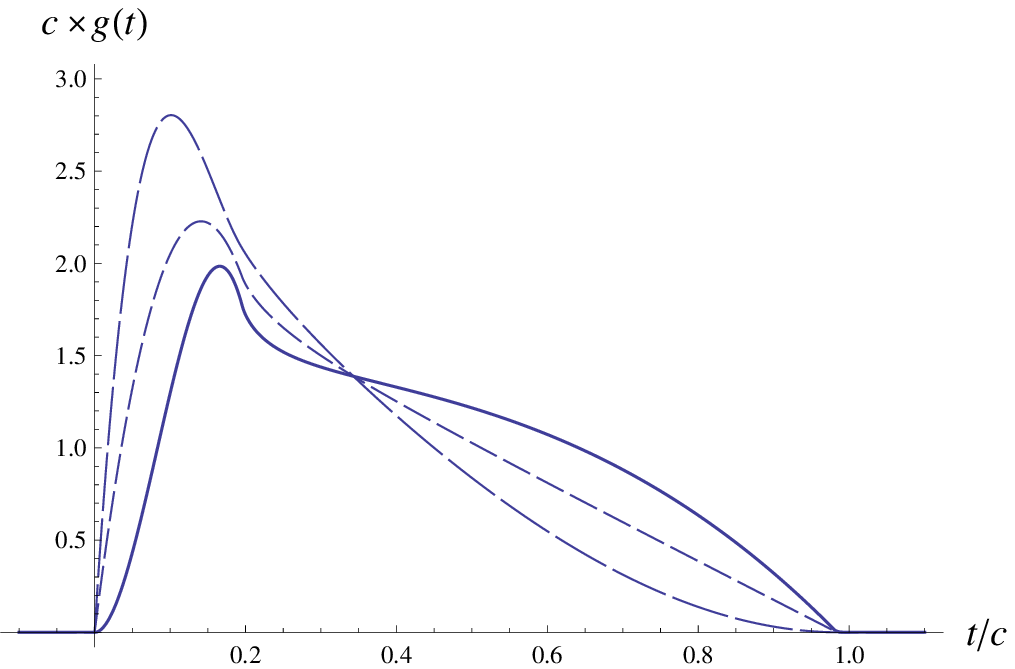}
  \end{center}
  \vspace{-0.7cm}
  \caption{\label{Bild4} $c\gR$ (short-dashed), $cg$ (long-dashed) and $cg_2$ for $b=5a$}
  \vspace{-1.7cm}
\end{figure}
\newpage

\begin{figure}[h]
  \vspace{-0.1cm}
  \begin{center}
    \includegraphics[scale=1.165]{}
  \end{center}
  \vspace{-0.7cm}
  \caption{\label{Bild5} $\GR$ (short-dashed), $G$ (long-dashed) and $G_2$ for $b=5a$}
  \vspace{0cm}
\end{figure}

 \vspace{1cm}
 \begin{center} 
 Uwe B\"asel\\[0.2cm] 
 HTWK Leipzig, University of Applied Sciences,\\
 Faculty of Mechanical and Energy Engineering,\\
 PF 30 11 66, 04251 Leipzig, Germany,\\[0.2cm]
 \end{center}
\end{document}